\documentclass{amsart}[12pt]
\usepackage{amsmath,amsfonts,amssymb,latexsym,amsthm}
\usepackage{graphicx,epsf}
\usepackage{epsfig}
\usepackage{cyr}
\usepackage{psfrag}
\usepackage{comment}

\voffset=5mm \oddsidemargin=22pt \evensidemargin=22pt
\headheight=9pt     \topmargin=26pt
\textheight=624pt   \textwidth=413.pt

\newtheorem{thm}{Theorem}
\newtheorem{lemma}[thm]{Lemma}

\newtheorem{corollary}[thm]{Corollary}


\def\.{\hskip.06cm}
\def\ts{\hskip.03cm}

\def\sq{\square}

\def\zz{\mathbb Z}

\def\CC{\mathbb C}

\def\sm{\smallsetminus}
\def\pr{\prime}

\def\la{\lambda}

\def\al{\alpha}
\def\be{\beta}

\def\vp{\varphi}

\def\cf{\mathcal F}
\def\cg{\mathcal G}

\def\ssu{\subset}

\def\<{\langle}
\def\>{\rangle}

\def\syt{ {\text {\rm SYT}  } }

\def\PP{ {\text {\rm \bf P} } }

\def\ts{\hskip.01cm}

\def\0{{\mathbf 0}}

\def\bs{\mathbf{z}}
\def\bt{\mathbf{u}}
\def\cor{\mathcal{C}}
\def\dor{\mathcal{D}}

\begin{document}
\title[Weighted hook length formula]%
    {The weighted hook length formula}
\author[Ionu\c t Ciocan-Fontanine]{Ionu\c t Ciocan-Fontanine$^*$}
\author[Matja\v z Konvalinka]{Matja\v z Konvalinka$^\dagger$}
\author[Igor Pak]{Igor Pak$^\ddag$}
\date{April 8, 2010}

\thanks{${\hspace{-1ex}}^*$School of Mathematics, U. of
Minnesota, Minneapolis; MN 55455, USA~\texttt{ciocan@math.umn.edu}}

\thanks{${\hspace{-.95ex}}^\dagger$Department of Mathematics,
Vanderbilt U., Nashville, TN 37240, USA;~\texttt{matjaz.konvalinka@vanderbilt.edu}}

\thanks{${\hspace{-.95ex}}^\ddag$Department of Mathematics,
UCLA, Los Angeles, CA 90095, USA;~\texttt{(pak@)math.ucla.edu}}

\begin{abstract}
\noindent
Based on the ideas in~\cite{CKP}, we introduce the weighted analogue
of the branching rule for the classical \emph{hook length formula},
and give two proofs of this result.  The first proof is completely
bijective, and in a special case gives a new short combinatorial
proof of the hook length formula.  Our second proof is probabilistic,
generalizing the (usual) hook walk proof of Green-Nijenhuis-Wilf~\cite{GNW},
as well as the $q$-walk of Kerov~\cite{Ker1}.  Further applications are also presented.
\end{abstract}

\maketitle

\section*{Introduction}\label{s:intro}

\noindent
The classical hook length formula gives a short product formula
for the dimensions of irreducible representations of the
symmetric group, and is a fundamental result in
algebraic combinatorics.  The formula was discovered by Frame,
Robinson and Thrall in~\cite{FRT} based on earlier results
of Young \cite{You}, Frobenius \cite{Fro} and Thrall \cite{Thr}.  Since then, it has been reproved,
generalized and extended in several different ways, and applied in a number of fields ranging from algebraic
geometry to probability, and from group theory to the analysis of algorithms.
Still, the hook length formula remains deeply mysterious and its
full depth is yet to be completely understood.  This paper is a new
contribution to the subject, giving a new multivariable extension of the formula, and a new combinatorial proof associated with it.

Let $\la = (\la_1 \ge \la_2 \ge \ldots)$ be a partition
of~$n$, let $[\la]$ be the corresponding Young diagram,
and let $\syt(\la)$ denote the set of standard
Young tableaux of shape~$\la$ (full definitions will be
given in the next section).
The \emph{hook length formula} for the dimension of the
irreducible representation $\pi_\la$ of the symmetric
group~$S_n$ can be written as follows:
$$\text{(HLF)} \ \qquad
\dim \pi_\la \, = \, |\syt(\la)| \, = \,
\frac{n!}{\prod_{x\in [\la]} \. h_x} \,,
$$
where the first equality is A.~Young's combinatorial
interpretation, the product on the right is
over all squares~$x$ in the Young diagram corresponding
to partition~$\la$, and~$h_x$ are the hook numbers
(see below).  In fact, Young's original approach
to the first equality hints at the direction of the
proof of the second equality.  More precisely, he proved the
following \emph{branching rules}:
$$\text{(BR)} \qquad \dim \pi_\la \. = \. \sum_{\mu \to \la} \. \dim \pi_\mu
\quad \ \text{and} \quad
|\syt(\la)| \. = \. \sum_{\mu \to \la} \. |\syt(\mu)|\,,
$$
where the summation is over all partitions~$\mu$
of~$n-1$ whose Young diagram fits inside that of~$\la$
(the second branching rule is trivial, of course).
Induction now implies the first equality in~(HLF).

In a similar way, the hook length formula is equivalent
to the following \emph{branching rule for the hook lengths}:
$$\text{(BRHL)} \qquad
\sum_{\text{corner} \, \. (r,s) \. \in \. [\la]}  \ \, \frac{1}{n}
\,\. \prod_{i=1}^{r-1} \, \frac{h_{is}}{h_{is}-1}
\, \prod_{j=1}^{s-1} \, \frac{h_{rj}}{h_{rj}-1}
\,  = \. 1\..
$$
Although this formula is very natural, it is difficult to
prove directly, so only a handful of proofs employ it
(see below and Subsection~\ref{ss:rem-hist}).

In an important development, Green, Nijenhuis and Wilf introduced
the \emph{hook walk} which proves~(BRHL) by a combination of a
probabilistic and a short but delicate induction argument~\cite{GNW}.
Zeilberger converted this hook walk proof into a bijective proof of~(HLF)~\cite{Zei},
but lamented on the ``enormous size of the input and output'' and ``the
recursive nature of the algorithm'' (ibid,~$\S 3$).  With time,
several variations of the hook walk have been discovered, most notably
the $q$-version of Kerov~\cite{Ker1}, and its further generalizations and variations (see~\cite{CLPS,GH,Ker2}). Still, before
this paper,  there were no direct combinatorial proofs of~(BRHL).

\medskip

In this paper we introduce and study the following \emph{weighted
branching rule for the hook lengths}:
$$\text{(WHL)} \qquad \
\aligned
& \sum_{\text{corner} \, (r,s) \. \in \. [\la]}  \ x_r \ts y_s \, \
\prod_{i=1}^{r-1} \. \left(1+ \frac{x_i}{x_{i+1}+\ldots + x_{r} \ts + \ts
 y_{s+1} + \ldots + y_{\lambda_i}}\right) \\
& \hskip1.cm \times \, \prod_{j=1}^{s-1} \.  \left(1+ \frac{y_j}{x_{r+1}+\ldots + x_{\lambda'_j} \ts + \ts
  y_{j+1} + \ldots + y_{s}}\right) \.  = \ts \sum_{(i,j) \in [\la]} \. x_i \ts y_j \..
\endaligned
$$
Here the weights $x_1,x_2,\ldots$ and
$y_1,y_2, \ldots$ correspond to the rows and columns of the Young diagram,
respectively, so the weight of square $(i,j)$ is $x_i y_j$.   Note that (WHL)
becomes (BRHL) for the unit weights $x_i = y_j = 1$, and can
be viewed both as a probabilistic result (when the weights are positive),
and as a rational function identity (when the weights are formal
commutative variables).

There is an interesting story behind this formula, as a number of its
special cases seem to be well known.  Most notably, for the staircase shaped
diagrams, Vershik discovered the formula and proved it by a technical
inductive argument~\cite{Ver}.  In this case, an elegant Lagrange interpolation
argument was later found by Kirillov~\cite{Kir} (see also~\cite{Ban,Ker2}),
while an algebraic application and a hook walk style proof was recently given
by the authors in~\cite{CKP}. In a different direction, there is a standard
(still multiplicative) $q$-analogue of~(HLF), which can be obtained as the
branching rule for the Hall-Littlewood polynomials (see~\cite[$\S 3$]{Mac}
for the explicit formulas and references).
\medskip

\noindent
There are three main tasks in the paper:

\medskip

\quad $(1)$ \ give a direct bijective proof of~(BRHL),

\medskip

\quad $(2)$ \ prove a weighted analogue~(WHL), and

\medskip

\quad $(3)$ \ give a hook walk proof of~(WHL).

\medskip

\noindent
Part~$(1)$ is done in Section~\ref{s:bij} and is completely self-contained.
Part~$(2)$ is essentially a simple extension of part~$(1)$, based on certain
properties of the bijection. The bijection in~$(1)$ is robust enough to prove
several variations on~(BRHL), which all have weighted analogues (Section~\ref{s:wbr}).
In a special case this gives certain Kirillov's summation formulas
and Kerov's $q$-formulas in~\cite{Ker1}, which
until now had only analytic proofs.

In Section~\ref{s:walk} we define two new walks, a ``weighted''
and a ``modified'' hook walk.  While both can be viewed as
extensions of the usual hook walk, we show that the latter
reduces to the former.  In fact, the modified hook walk is
motivated and implicitly studied in our previous paper~\cite{CKP}.
The complete proof of~(WHL) via the weighted hook walk is then
given in Section~\ref{s:walk-proof}.  We conclude with historical
remarks and final observations in Section~\ref{s:rem}.

\newpage

\section{Definitions and notations}\label{s:def}

An integer sequence \. $\la = (\la_1,\la_2,\dots,\la_\ell)$ \.
is a {\it partition} \. of~$n$, write $\la \vdash n$,
if $\la_1 \ge \la_2 \ge \ldots \ge \la_\ell >0 $, and
$|\la| = \la_1+\la_2 + \ldots +\la_\ell = n$.  From now on, let $\ell=\ell(\la)$
denote the number of parts, and let $m = \la_1$
denote the length of the largest part of~$\la$.
Define the \ts {\it conjugate partition} \.
$\la^\pr = (\la_1^\pr,\dots,\la_m^\pr)$ \. by
$\la_j^\pr = |\{i : \la_i \ge j\}|$.

A {\it Young diagram} \, $[\la]$ \. corresponding to $\la$
is a collection of squares $(i,j) \in \zz^2$, such that $1 \le j \le \la_i$.
The \emph{hook} $H_{\bs} \ssu [\la]$ is the set of squares weakly
to the right and below of $\bs = (i,j) \in [\la]$, and the {\it hook length} \.
$h_\bs = |H_\bs|= \la_i +\la_j^\pr - i - j +1$ is the size of the hook
(see Figure~\ref{f:hook}).

We say that \, $(i_1,j_1) \prec (i_2,j_2)$ \, if \, $i_1 \le i_2$,
$j_1 \le j_2$, and $(i_1,j_1) \ne (i_2,j_2)$.
A {\it standard Young tableau}~$A$ \, of shape $\la$ is a bijective map
$f: [\la] \to [n] = \{1,\dots,n\}$, such that $f(i_1,j_1) < f(i_2,j_2)$
for all $(i_1,j_1) \prec (i_2,j_2)$. We denote the set of
standard Young tableaux of shape~$\la$  by~$\syt(\la)$.  For example,
for $\la=(3,2,2)\vdash 7$, the \emph{hook length formula} (HLF) in the
introduction gives:
$$|\syt(3,2,2)| \, = \,
\frac{7!}{5\cdot 4 \cdot 3 \cdot 2\cdot 2 \cdot 1 \cdot 1} \, = \, 21\ts.
$$

Throughout the paper, we draw a Young diagram with the first
coordinate increasing downwards, and the second coordinate increasing
from left to right.  We then label the rows of the diagram with variables~$x_1,x_2,\ldots$,
and the columns with variables~$y_1,y_2,\ldots$ (see Figure~\ref{f:hook}).
Thus, if the reader prefers the French notation
(and standard Descartes coordinates), then a $90^\circ$ counterclockwise
rotation of a diagram is preferable to the mirror reflection as suggested
in~\cite{Mac}.

\begin{figure}[hbt]
\psfrag{x1}{$x_1$}
\psfrag{x2}{$x_2$}
\psfrag{x3}{$x_3$}
\psfrag{x4}{$x_4$}
\psfrag{x5}{$x_5$}
\psfrag{y1}{$y_1$}
\psfrag{y2}{$y_2$}
\psfrag{y3}{$y_3$}
\psfrag{y4}{$y_4$}
\psfrag{y5}{$y_5$}
\psfrag{y6}{$y_6$}
\psfrag{L}{$\la$}
\psfrag{A}{$A$}
\begin{center}
\epsfig{file=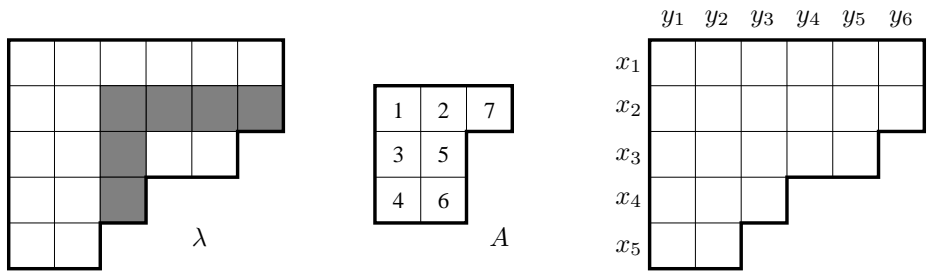,height=3.5cm}
\end{center}
\caption{Young diagram $[\la]$,  $\la = (6,6,5,3,2)$, and a
hook $H_{23}$ with hook length $h_{23}=6$; a standard Young tableau~$A$ of
shape $(3,2,2)$; a labeling of rows and columns of~$\la$.}
\label{f:hook}
\end{figure}

A \emph{corner} of the Young diagram~$[\la]$ is a square $(i,j) \in [\la]$
such that $(i+1,j) \notin [\la]$, $(i,j+1) \notin [\la]$.  Clearly, $(i,j)\in [\la]$ is a corner
if and only if $h_{ij} =1$. By $\cor[\la]$ we denote the set
of corners of~$[\la]$.  For example, the diagram $[3,2,2]$ has two corners, $(1,3)$ and $(3,2)$.

As in the introduction, we write $\mu \to \la$ for all $|\mu| = |\la|-1$
such that $[\mu] \ssu [\la]$.  Alternatively, this is equivalent to saying
that $[\mu]=[\la] \sm \bs$, for some corner $\bs \in \cor[\la]$.   Now the
\emph{branching rule}~(BR) for the standard Young tableaux follows
immediately by removing the corner containing~$n$.

\vskip.8cm


\section{A new bijective proof of the hook length formula}\label{s:bij}

\subsection{The algebraic setup} \,
We start by formalizing the induction approach
outlined in the introduction.  First, observe that to obtain the
hook length formula~(HLF) by induction it suffices to prove
the following identity:
\begin{equation} \label{e:ind-hl}
\frac{n!}{\prod_{\bs\in [\la]} h_\bs} \, = \, \sum_{\mu \to \la} \.
\frac{(n-1)!}{\prod_{\bt\in [\mu]} h_\bt}\,.
\end{equation}
Indeed, by the branching rule~(BR) for the standard Young tableaux,
this immediately gives the induction step:
$$
|\syt(\la)| \, = \, \sum_{\mu \to \la} \. |\syt(\mu)| \, = \,
\sum_{\mu \to \la} \.
\frac{(n-1)!}{\prod_{\bt\in [\mu]} h_\bt} \, = \,
\frac{n!}{\prod_{\bs\in [\la]} h_\bs}\,,
$$
which proves the~(HLF).  Rewriting~\eqref{e:ind-hl}, we obtain:
\begin{equation} \label{e:ind-hl-1}
1 \, = \, \sum_{\mu \to \la}
\frac{(n-1)!}{n!} \frac{\prod_{\bs \in \la} \. h_\bs}{\prod_{\bt \in [\mu]} \. h_\bt} \. = \.
\sum_{(r,s)\in \cor[\la]} \. \frac{1}{n} \. \prod_{i=1}^{r-1} \. \frac{h_{is}}{h_{is}-1}
\. \prod_{j=1}^{s-1} \. \frac{h_{rj}}{h_{rj}-1}\,.
\end{equation}
Multiplying both sides of~\eqref{e:ind-hl-1} by the common denominator, we get the following
equivalent identity:
\begin{equation} \label{e:brhl}
n \cdot \prod_{\bs \in [\la]\sm \cor[\la]} \, (h_\bs-1) \, = \,
\sum_{(r,s)\in \cor[\la]} \.  \prod_{i=1}^{r-1} \. h_{is}
\. \prod_{j=1}^{s-1} \. h_{rj} \, \prod_{\bs \in \dor_{rs}[\la]}
(h_\bs-1) \,,
\end{equation}
where the last product is over the set
$$\dor_{rs}[\la] \, = \, \{(i,j)\in [\la]\sm \cor[\la], \
\text{such that} \ i \ne r, \, j \ne s\}\ts .
$$
Below we prove the following multivariable extension of this identity:
\begin{equation} \label{e:wbrhl}
\aligned
& \left[\sum_{(p,q) \in [\la]} \.x_p \ts y_q\right] \. \cdot \. \left[
\prod_{(i,j) \in [\la]\sm \cor[\la]} \, \left(x_{i+1} + \ldots + x_{\la_j'}
+ \. y_{j+1}+ \ldots + y_{\la_i}\right)\right] \\
& \, = \, \sum_{(r,s) \in \cor[\la]} \.x_r \ts y_s \,  \left[
\prod_{(i,j) \in \dor_{rs}[\la]} \, \left(x_{i+1} + \ldots + x_{\la_j'}
+ \. y_{j+1}+ \ldots + y_{\la_i}\right)\right]\\
& \, \times \,  \left[ \prod_{i=1}^{r-1} \, \left(x_i + \ldots + x_{r}
+ \. y_{s+1}+ \ldots + y_{\la_i}\right)\right]  \cdot
\left[ \prod_{j=1}^{s-1} \, \left(y_j + \ldots + y_{s}
+ \. x_{r+1}+ \ldots + x_{\la_j'}\right)\right]
\endaligned
\end{equation}
Clearly, when
$x_1=x_2 = \ldots = y_1 = y_2 = \ldots =1$, we obtain~\eqref{e:brhl}.\footnote{In fact,
equation~\eqref{e:wbrhl} immediately implies~(WHL), but more on this in the next section.}
Note also that both sides are homogenous polynomials
of degree $d_\la=|\la|+2- \bigl|\cor[\la]\bigr|$.

\subsection{The bijection} \,	
Now we present a bijective proof of~\eqref{e:wbrhl}, by interpreting
both sides as certain sets of arrangements of labels (see Section~\ref{s:def}).

For the l.h.s. of~\eqref{e:wbrhl}, we are given:
\begin{itemize}
 \item special labels $x_p,y_q$, corresponding to the first summation
$\sum_{(p,q) \in [\la]} \.x_p \ts y_q$;
 \item a label $x_k$ for some $i < k \le \lambda'_j$, or~$y_l$ for some $j < l \le \lambda_i$,
 in every non-corner square $(i,j)$.
\end{itemize}
Denote by~$F$ the resulting
arrangement of~$d_\la$ labels (see Figure~\ref{f:wex}, first diagram),
and by $\cf_\la$ the set of such labeling arrangements~$F$.

For the r.h.s. of~\eqref{e:wbrhl}, we are given
\begin{itemize}
 \item special labels $x_r,y_s$, corresponding to the corner $(r,s)$;
 \item a label $x_k$ for some $i < k \le \lambda'_j$, or~$y_l$ for some $j < l \le \lambda_i$,
 in every non-corner square $(i,j)$, $i \ne r$, $j \ne s$;
 \item a label $x_k$ for some $i \le k \le \lambda'_j$, or~$y_l$ for some $s < l \le \lambda_i$,
 in every non-corner square $(i,s)$;
 \item a label $x_k$ for some $r < k \le \lambda'_j$, or~$y_l$ for some $j \le l \le \lambda_i$,
 in every non-corner square $(r,j)$.
\end{itemize}
Denote by~$G$ the resulting
arrangement of~$d_\la$ labels (see Figure~\ref{f:wex}, last diagram),
and by $\cg_\la$ the set of such labeling arrangements~$G$. The bijection $\vp: F\mapsto G$ is now defined by rearranging
the labels.

\medskip

\noindent
\emph{Direct bijection~$\vp:\cf_\la \to \cg_\la$.} \,

\noindent
We can interpret the special labels $x_p,y_q$ as the starting square $(p,q)$.
Furthermore, we can interpret all other labels as arrows pointing to a square
in the hook. More specifically, if the label in square $(i,j)$
is $x_{k}$, the arrow points to $(k,j)$, and if the label is $y_{l}$, the arrow
points to~$(i,l)$.

Let the arrow from square $(p,q)$ point to a square $(p',q')$ in the hook $H_{pq} \setminus \{(p,q)\}$, the
arrow from~$(p',q')$ point to a square $(p'',q'')\in H_{p'q'} \setminus \{(p',q')\}$, etc.
Iterating this, we eventually obtain a \emph{hook walk}~$W$ which reaches a corner
$(r,s) \in \cor[\la]$ (see Figure~\ref{f:wex}, second diagram).

Shade row $r$ and column $s$. Now we shift the labels in the hook walk and in its projection onto the shaded row and column.
If the hook walk has a horizontal step from $(i,j)$ to $(i,j')$, move the label in $(i,j)$
right and down from $(i,j)$ to $(r,j')$, and the label from $(r,j)$ up to $(i,j)$. If the hook walk has a
vertical step from $(i,j)$ to $(i',j)$, move the label from $(i,j)$ down and right to $(i',s)$,
and the label from $(i,s)$ left to $(i,j)$. Finally, move the label $x_p$ to $(p,s)$, the
label $y_q$ to $(r,q)$, the label $x_r$ to $(r,0)$, and the label $y_s$ to $(0,s)$. See Figure~\ref{f:wex}, third diagram. We denote by $G$ the resulting arrangement of labels (Figure~\ref{f:wex}, fourth diagram).

We now have labels in all non-corner squares, and special labels $x_r$ and $y_s$ corresponding to the corner $(r,s)$.
We claim that $G \in \cg_\la$. Indeed, if there is a horizontal step in the hook walk
from $(i,j)$ to $(i,j')$, that means that the label in $(i,j)$ is $y_{j'}$, and then
the new label in $(r,j')$ is $y_{j'}$; since the label in that square should be $x_k$ for some $r < k \le \lambda'_j$, or~$y_l$ for some $j' \le l \le \lambda_i$, this is acceptable. Also, the new label in $(i,j)$ is the old label from $(r,j)$, so it is either
$x_k$ for $k > r \geq i$ or $y_l$ for $l > j$; both are acceptable. The case when the step is vertical is analogous.

\begin{figure}[hbt]
\psfrag{x1}{$\scriptstyle x_1$}
\psfrag{x2}{$\scriptstyle x_2$}
\psfrag{x3}{$\scriptstyle x_3$}
\psfrag{x4}{$\scriptstyle x_4$}
\psfrag{x5}{$\scriptstyle x_5$}
\psfrag{x6}{$\scriptstyle x_6$}
\psfrag{y1}{$\scriptstyle y_1$}
\psfrag{y2}{$\scriptstyle y_2$}
\psfrag{y3}{$\scriptstyle y_3$}
\psfrag{y4}{$\scriptstyle y_4$}
\psfrag{y5}{$\scriptstyle y_5$}
\psfrag{y6}{$\scriptstyle y_6$}
\psfrag{y7}{$\scriptstyle y_7$}

\begin{center}
\includegraphics[width=0.95 \textwidth]{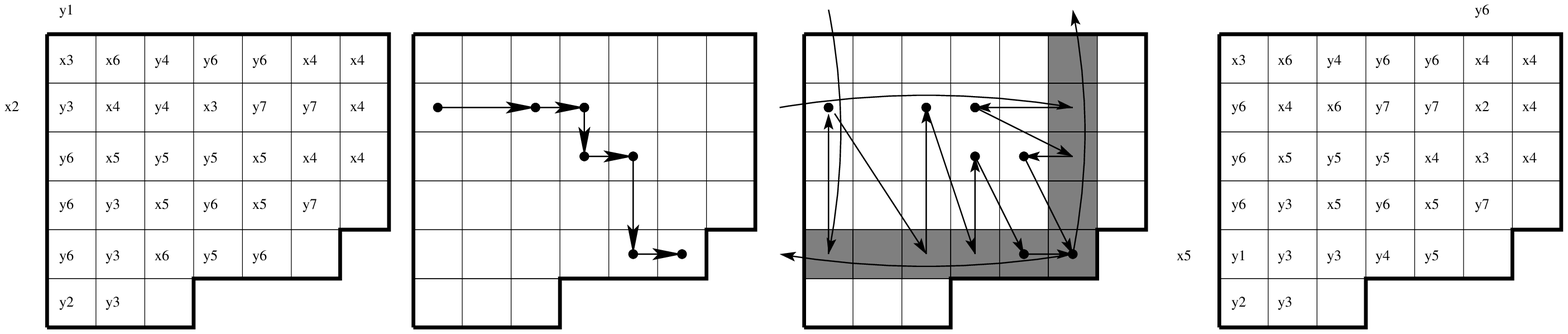}
\end{center}
\caption{An example of an arrangement corresponding to the left-hand side of WBR for $\lambda = 777763$; hook walk; shift of labels; final arrangement.}
\label{f:wex}
\end{figure}

\begin{lemma}
 The map $\vp:  \cf_\la \to \cg_\la$ defined above is a bijection.
\end{lemma}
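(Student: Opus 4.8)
The plan is to prove that $\vp$ is a bijection by exhibiting an explicit inverse $\psi\colon\cg_\la\to\cf_\la$ and checking that $\psi\circ\vp=\id$ and $\vp\circ\psi=\id$. A useful first observation is that, once the ending corner $(r,s)$ and the hook walk $W$ are fixed, the shift is simply a bijection between the $d_\la$ label-positions of $F$ --- the non-corner squares together with the two special slots carrying $x_p,y_q$ --- and the $d_\la$ label-positions of $G$ --- the non-corner squares together with the two special slots carrying $x_r,y_s$ --- relocating each label and neither creating nor destroying any. In particular $\vp$ preserves the product of all labels, so the term indexed by $F$ on the left of~\eqref{e:wbrhl} equals the term indexed by $\vp(F)$ on the right; hence, once bijectivity is known, the identity~\eqref{e:wbrhl} follows termwise. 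Consequently it suffices to recover, from $G$ alone, the corner $(r,s)$, the walk $W$, and the starting square $(p,q)$: the shift can then be undone position by position.

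Given $G\in\cg_\la$, the corner $(r,s)$ is read off immediately, since the special labels of $G$ are exactly $x_r,y_s$. I would then reconstruct $W$ by a reverse walk that reads the labels in the shaded row $r$ and shaded column $s$. The point is that the forward shift leaves a label in \emph{boundary position} --- a label $y_l$ lying in its own column, i.e.\ in $(r,l)$, or a label $x_k$ lying in its own row, i.e.\ in $(k,s)$ --- precisely at the columns and rows where the walk turned (its horizontal landing columns and vertical landing rows) and at the projections $(r,q),(p,s)$ of the start. These boundary values are exactly the extra values admitted by the relaxed ranges $i\le k$ in column $s$ and $j\le l$ in row $r$ that distinguish $\cg_\la$ from $\cf_\la$; every square off the shaded row and column still carries a strict label ($x_k$ with $k>i$, or $y_l$ with $l>j$), so it can never be mistaken for a turning mark. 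Reading these marks together with the displaced labels that the shift pushed onto the walk squares lets me retrace the monotone staircase $W$ from $(r,s)$ back to its source $(p,q)$ while simultaneously reversing each label move --- returning the label in $(i,j)$ to $(r,j)$ or to $(i,s)$, and restoring $x_p,y_q$ to the special slots --- thereby producing an arrangement $\psi(G)=F\in\cf_\la$.

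The verification of the two compositions is then largely bookkeeping. For $\psi\circ\vp=\id$ one notes that the boundary marks produced by $\vp(F)$ encode exactly the walk $W$ that $\vp$ used, so the reverse walk recovers the same $W$ and the same $(p,q)$, and undoing the (invertible) position-permutation returns the labels of $F$. For $\vp\circ\psi=\id$ one argues in the other direction, the essential extra point being that for an arbitrary $G$ the reverse walk stays inside $[\la]$ and halts at a genuine square $(p,q)$ whose recovered labels satisfy the $\cf_\la$ constraints, so that $\psi$ is well defined; this is the counterpart of the check, begun in the excerpt, that $\vp(F)\in\cg_\la$.

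The main obstacle is the reverse walk itself, specifically the claim that $W$ is recovered \emph{uniquely and correctly} from $G$ --- including the interleaving of its horizontal and vertical steps, not merely the sets of turning columns and rows. Those sets by themselves leave the order of the steps undetermined, since any shuffle of the horizontal and vertical jumps yields a monotone staircase with the same turning columns and rows (and, likewise, the boundary marks do not by themselves separate the start from the landings); so the reconstruction must extract the interleaving from the displaced, non-boundary labels sitting on the walk squares. Proving that this local information is always present, mutually consistent, and never drives the reverse walk out of $[\la]$ is the delicate heart of the argument, and it is exactly here that the asymmetric inequality ranges defining $\cg_\la$ do their work: they simultaneously make the turning marks admissible (so that indeed $G\in\cg_\la$) and unambiguous (so that $\psi$ is single-valued).
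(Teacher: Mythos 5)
Your overall strategy is the same as the paper's: recover $(r,s)$ from the special labels, recognize the walk's projections as the ``boundary'' labels ($y_j$ sitting at $(r,j)$, $x_i$ sitting at $(i,s)$), reconstruct $W$, and undo the shift. But your proposal stops exactly where the paper's proof does its real work: you explicitly defer the recovery of the interleaving of horizontal and vertical steps to an unproven assertion that ``the asymmetric inequality ranges defining $\cg_\la$ do their work.'' That interleaving rule is the entire content of the inverse construction, and it is completely concrete. In $G$, the label now occupying a walk square $(i,j)$ is the label the shift displaced there: it came from $(r,j)$ if the step out of $(i,j)$ was horizontal, and from $(i,s)$ if that step was vertical. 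The label itself tells you which happened: a label admissible at $(r,j)$ is $x_k$ with $k>r$, or $y_l$ with $l\le s$ (since $\la_r=s$), whereas a label admissible at $(i,s)$ is $x_k$ with $k\le r$ (since $\la'_s=r$), or $y_l$ with $l>s$. These two ranges are disjoint and together cover every possible label, precisely because $(r,s)$ is a corner; so each walk square announces the direction of the step taken from it, and the landing square is then the nearest square in that direction whose projection onto the shaded row or column is marked (monotonicity of the walk guarantees this is the correct landing square). Without this rule your $\psi$ is simply not defined, so as written the proposal has a hole at its load-bearing step.

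A secondary inaccuracy: you claim that ``the boundary marks do not by themselves separate the start from the landings,'' and for that reason you propose retracing the walk backwards from the corner. In fact they do separate: since the walk is monotone, the starting square $(p,q)$ has the smallest row and column indices among all walk squares, so $p$ is the smallest $i$ with $x_i$ at $(i,s)$ and $q$ is the smallest $j$ with $y_j$ at $(r,j)$ (taking $p=r$, resp.\ $q=s$, when no such mark exists). The paper anchors the reconstruction at $(p,q)$ found this way and rebuilds $W$ forwards, step by step, by the direction rule above; once you add that rule and this minimality observation, your plan becomes the paper's proof, and the backward-tracing device is unnecessary.
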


The lemma follows from the construction of the inverse map.

\medskip

\noindent
\emph{Inverse bijection~$\vp^{-1}:\cg_\la \to \cf_\la$.} \,

\noindent
Start with~$G$ and shade the row and column of~$[\la]$ corresponding
to the two special labels $x_r$ and~$y_s$, where $(r,s)$ is the given corner.
Recall from the construction of~$\vp$
that the projections of~$W$ onto the shaded row are the squares $(r,j)$
with label $y_j$, and the projections of~$W$ onto the shaded column are the squares $(i,s)$
with label $x_i$. Clearly, the smallest such $i$ and $j$ give the special
labels $x_p$, $y_q$ (if no such $i$ and/or $j$ exists, take $p=r$ and/or $q=s$).
Suppose that the label in square $(p,q)$ is $x_{k}$ for $k > p$.
If $k \leq r$, then $x_{k}$ is an acceptable label for the square
$(p,s)$ (and not for $(r,q)$). If $k > r$, then it is an acceptable label for
$(r,q)$ (and not for $(p,s)$). On the other hand, if the label
in $(p,q)$ is $y_{l}$ for $l>q$, then $y_{l}$ is an acceptable label
for $(r,q)$ if $l \leq s$ and an acceptable label for $(p,s)$ if $l>s$.
Therefore, the label at $(p,q)$ determines in which
direction from~$(p,q)$ the step of the walk~$W$ is made.

Now find the next square in that direction whose projections onto shaded row and column
are in the projections of $W$, and repeat the procedure.  At the end we obtain the whole walk~$W$.
Then simply undo the shifting of labels described in the
construction of~$\vp$.

A straightforward check shows that this is indeed the
initial label arrangement~$F$.  This implies the lemma
and completes the proof of \eqref{e:wbrhl} and of the hook length formula~(HLF).
\hfill $\sq$


\vskip1.4cm

\section{Weighted branching rule for the hook lengths}\label{s:wbr}

\subsection{Main theorem.} The main result of this paper can be summarized in one theorem:

\begin{thm} \label{t:main}
 Fix a partition $\lambda$. For commutative variables $x_i,y_j$, write
 $${\prod}_{rs} =  \ x_r \, y_s \prod_{i = 1}^{r-1} {\textstyle \left(1+ \frac{x_i}{x_{i+1}+\ldots + x_{r} + y_{s+1} + \ldots + y_{\lambda_i}}\right)} \cdot \prod_{j = 1}^{s-1} {\textstyle \left(1+ \frac{y_j}{x_{r+1}+\ldots + x_{\lambda'_j} + y_{j+1} + \ldots + y_{s}}\right)}.$$
 Then we have the following rational function identities:
 \begin{enumerate}
  \renewcommand{\labelenumi}{(\alph{enumi})}
  \item $\sum_{(r,s)} {\prod}_{rs} = \sum_{(p,q) \in [\lambda]}  x_p  y_q$
  \item $\sum_{(r,s)} \frac{1}{x_{r+1}+\ldots+x_{\ell(\lambda)}+y_1+ \ldots + y_s} \cdot {\prod}_{rs} = \sum_{p=1}^{\ell(\lambda)} x_p$
  \item $\sum_{(r,s)} \frac{1}{x_{1}+\ldots+x_{r}+y_{s+1}+ \ldots + y_{\lambda_1}} \cdot {\prod}_{rs} = \sum_{q=1}^{\lambda_1} y_q$
  \item $\sum_{(r,s)} \frac{1}{(x_{r+1}+\ldots+x_{\ell(\lambda)}+y_1+ \ldots + y_s)(x_{1}+\ldots+x_{r}+y_{s+1}+ \ldots + y_{\lambda_1})} \cdot {\prod}_{rs} = 1$
 \end{enumerate}
\end{thm}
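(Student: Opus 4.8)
First I would observe that part~(a) is exactly the identity~(WHL), and that it follows at once from the polynomial identity~\eqref{e:wbrhl} already established by the bijection $\vp$ of the preceding Lemma. Set
$$P \, = \, \prod_{(i,j)\in[\la]\sm\cor[\la]}\bigl(x_{i+1}+\ldots+x_{\la'_j}+y_{j+1}+\ldots+y_{\la_i}\bigr),$$
the bracketed product on the left-hand side of~\eqref{e:wbrhl}. Dividing~\eqref{e:wbrhl} by $P$ produces $\sum_{(p,q)}x_p y_q$ on the right, so it remains to check that the $(r,s)$-summand on the right of~\eqref{e:wbrhl}, divided by $P$, equals ${\prod}_{rs}$. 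The squares of $[\la]\sm\cor[\la]$ split into $\dor_{rs}[\la]$, the squares $(i,s)$ with $i<r$, and the squares $(r,j)$ with $j<s$. Since $(r,s)$ is a corner we have $\la'_s=r$ and $\la_r=s$, so the $P$-factor of $(i,s)$ is $x_{i+1}+\ldots+x_r+y_{s+1}+\ldots+y_{\la_i}$ and that of $(r,j)$ is $x_{r+1}+\ldots+x_{\la'_j}+y_{j+1}+\ldots+y_s$. These are precisely the denominators occurring in the two products defining ${\prod}_{rs}$, while the matching numerators are exactly the factors in the last two brackets of~\eqref{e:wbrhl}, and the $\dor_{rs}[\la]$-factors cancel. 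Hence the ratio is ${\prod}_{rs}$, which proves~(a).

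\textbf{Reducing (b), (c), (d) to polynomial identities.} For the remaining parts I would clear denominators. Multiplying by $P$ and reusing the computation above, the $(r,s)$-summand becomes the full $(r,s)$-summand $R_{rs}$ of~\eqref{e:wbrhl} divided by the extra linear factor(s). The key point is that each such factor already occurs inside $R_{rs}$: the factor $x_{r+1}+\ldots+x_{\ell(\la)}+y_1+\ldots+y_s$ of~(b) is the $j=1$ factor $\bigl(y_1+\ldots+y_s+x_{r+1}+\ldots+x_{\la'_1}\bigr)$ of the last bracket (since $\la'_1=\ell(\la)$); the factor of~(c) is the $i=1$ factor $\bigl(x_1+\ldots+x_r+y_{s+1}+\ldots+y_{\la_1}\bigr)$ of the middle bracket; and~(d) removes both. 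Thus each of (b)--(d) is equivalent to a polynomial identity of the same shape as~\eqref{e:wbrhl} in which one or two \emph{boundary} factors are deleted from every corner summand, and the left-hand factor $\sum_{(p,q)}x_p y_q$ is replaced by $\sum_p x_p$, $\sum_q y_q$, or $1$, respectively.

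\textbf{Bijective step.} I would then prove these polynomial identities by the method of the Lemma, exploiting the robustness of $\vp$. On the right, $\sum_p x_p$ (resp.\ $\sum_q y_q$, resp.\ $1$) now marks only a starting \emph{row} (resp.\ column, resp.\ nothing) instead of a starting square $(p,q)$; on the left, deleting a boundary factor means the square $(r,1)$ (resp.\ $(1,s)$, resp.\ both) carries no label. The map $\vp$ then sends an arrangement whose hook walk starts with its column coordinate pinned to $1$ (so that the special label $y_q$ is suppressed) to the corresponding deleted-boundary arrangement in $\cg_\la$, and conversely. Since~(c) is obtained from~(b) by conjugating $\la\mapsto\la'$ and interchanging the roles of the $x$'s and $y$'s, and~(d) is self-conjugate, it suffices to treat~(b) and~(d).

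\textbf{Main obstacle.} The delicate point is the invertibility of the modified $\vp$ along the shaded boundary. In the original construction the first step of the walk out of $(p,q)$ is read off from the special labels $x_p,y_q$; once the start is pinned to the boundary these labels are partly absent, so I must verify that the surviving labels in row $r$ and column $s$ still determine the walk uniquely and that the deleted squares $(r,1)$, $(1,s)$ create no ambiguity. Checking this boundary case analysis --- especially the doubly-pinned case of~(d) --- is where the real work lies, since the interior of the bijection is unchanged from the Lemma.
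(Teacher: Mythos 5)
Your proposal is correct and follows essentially the same route as the paper's own proof: part (a) by dividing \eqref{e:wbrhl} through by the common product, parts (b)--(d) reduced to polynomial identities with the boundary factors deleted (the paper's \eqref{e:wbrh2} and \eqref{e:wbrh3}), proved by running the bijection $\varphi$ with the walk started at $(p,1)$, $(1,q)$, or $(1,1)$, and (c) deduced from (b) by conjugation. The boundary invertibility check you flag as the remaining work is exactly the step the paper itself dispenses with as ``easy details.''
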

\begin{proof}
 It is clear that we get part (a) from equation \eqref{e:wbrhl} by dividing both sides by the expression $\prod_{(i,j) \in [\la]\sm \cor[\la]} \, (x_{i+1} + \ldots + x_{\la_j'}
 + \. y_{j+1}+ \ldots + y_{\la_i})$. Identity (b) is equivalent to
 \begin{equation}  \label{e:wbrh2}
 \aligned
 & \quad \left[\sum_{p = 1}^{\ell(\lambda)} \.x_p \right] \. \cdot \. \left[
 \prod_{(i,j) \in [\la]\sm \cor[\la]} \, \left(x_{i+1} + \ldots + x_{\la_j'}
 + \. y_{j+1}+ \ldots + y_{\la_i}\right)\right] \\
 & \quad \, = \, \sum_{(r,s) \in \cor[\la]} \.x_r \,  \left[
 \prod_{(i,j) \in \dor_{rs}[\la]} \, \left(x_{i+1} + \ldots + x_{\la_j'}
 + \. y_{j+1}+ \ldots + y_{\la_i}\right)\right]\\
 & \, \times \,  \left[ \prod_{i=1}^{r-1} \, \left(x_i + \ldots + x_{r}
 + \. y_{s+1}+ \ldots + y_{\la_i}\right)\right]  \cdot
 \left[ \prod_{j=2}^{s} \, \left(y_j + \ldots + y_{s}
 + \. x_{r+1}+ \ldots + x_{\la_j'}\right)\right]
 \endaligned
 \end{equation}

 Let us show that by analogy with \eqref{e:wbrhl}, this identity can be proved by using the bijection $\varphi$.
 The left-hand side of~\eqref{e:wbrh2} corresponds to arrangements as in the left-hand side of \eqref{e:wbrhl}
 with an additional label $x_p$. Similarly, the right hand
 side of~\eqref{e:wbrh2} corresponds to arrangements as in the right-hand side of \eqref{e:wbrhl}, except the square $(r,1)$ does not get a label. Start
 the hook walk in square $(p,1)$ and proceed as in the proof of \eqref{e:wbrhl}. Now observe that the bijection $\varphi$ gives
 the bijection between these sets of label arrangements. We omit the easy details.

 Identity (c) follows from (b) by conjugation, and (d) can be rewritten in the following form:
  \begin{equation}  \label{e:wbrh3}
  \aligned
  & \quad \left[
  \prod_{(i,j) \in [\la]\sm \cor[\la]} \, \left(x_{i+1} + \ldots + x_{\la_j'}
  + \. y_{j+1}+ \ldots + y_{\la_i}\right)\right] \\
  & \quad \, = \, \sum_{(r,s) \in \cor[\la]} \  \left[
  \prod_{(i,j) \in \dor_{rs}[\la]} \, \left(x_{i+1} + \ldots + x_{\la_j'}
  + \. y_{j+1}+ \ldots + y_{\la_i}\right)\right]\\
  & \, \times \,  \left[ \prod_{i=2}^{r} \, \left(x_i + \ldots + x_{r}
  + \. y_{s+1}+ \ldots + y_{\la_i}\right)\right]  \cdot
  \left[ \prod_{j=2}^{s} \, \left(y_j + \ldots + y_{s}
  + \. x_{r+1}+ \ldots + x_{\la_j'}\right)\right]
  \endaligned
  \end{equation}
 We prove \eqref{e:wbrh3} in a similar way. Start the walk in square $(1,1)$ and proceed as above. Observe that in this case, we
 do not get a label in squares $(r,1)$ and $(1,s)$. The bijection $\varphi$, restricted to this set of label arrangements, proves the equality. We omit the easy details.
\end{proof}

\subsection{The $q$-version.} In \cite{Ker1}, Kerov proved the following identities.\footnote{Let us note that this is a corrected version of the theorem as the original contained a typo.}

\begin{corollary}[Kerov]
 Fix a pair of sequences of reals $X_1,\ldots,X_d$ and $Y_0,\ldots,Y_d$ such that $Y_0 < X_1 < Y_1 < X_2 < \ldots < X_d < Y_d$. Define
 \begin{alignat*}{2}
 \pi_k(q) = & \prod_{i=1}^{k-1} \frac{q^{Y_i}-q^{X_k}}{q^{X_i}-q^{X_k}} \prod_{i=k+1}^d \frac{q^{X_k}-q^{Y_{i-1}}}{q^{X_k}-q^{X_i}}, \qquad 1 \leq k \leq d \\
 Z = &\sum_{i=1}^d q^{X_i} - \sum_{i=1}^{d-1}q^{Y_i}, \qquad S = \sum_{1 \leq i \leq j \leq d} (q^{Y_{i-1}}-q^{X_i})(q^{X_j}-q^{Y_j}).
 \end{alignat*}
 Then:
 \begin{enumerate}
  \renewcommand{\labelenumi}{(\alph{enumi})}
  \item $\displaystyle\sum_{k} \pi_k(q) = 1$
  \item $\displaystyle \sum_{k} \frac{q^{Y_0}-q^{X_k}}{q^{Y_0} - Z} \, \pi_k(q) = 1$
  \item $\displaystyle \sum_{k} \frac{q^{X_k}-q^{Y_d}}{Z - q^{Y_d}} \, \pi_k(q) = 1$
  \item $\displaystyle \sum_{k} \frac{(q^{Y_0}-q^{X_k})(q^{X_k}-q^{Y_d})}{S} \, \pi_k(q) = 1$
 \end{enumerate}
\end{corollary}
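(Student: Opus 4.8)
The plan is to deduce Kerov's four identities from the four identities of Theorem~\ref{t:main} by a single specialization of the variables $x_i,y_j$ coming from the contents of the rim of $[\la]$. Recall that the removable corners of $[\la]$ have contents $X_1<\dots<X_d$, the addable corners have contents $Y_0<\dots<Y_d$, and these interlace exactly as in the hypothesis; the $k$-th corner $(r,s)\in\cor[\la]$ (ordered by content) satisfies $X_k=s-r$. I would identify each row and column of $[\la]$ with a unit step of the rim: row $i$ with the vertical step of its right edge, joining contents $\la_i-i$ and $\la_i-i+1$, and column $j$ with the horizontal step of its bottom edge, joining $j-\la'_j-1$ and $j-\la'_j$. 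The substitution is then
\[
x_i \;=\; q^{\,\la_i-i+1}-q^{\,\la_i-i},\qquad y_j \;=\; q^{\,j-\la'_j}-q^{\,j-\la'_j-1}.
\]

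The engine of the proof is a telescoping principle: along any connected arc of the rim, the weights of the constituent steps sum to $q^{c'}-q^{c''}$, the difference of the $q$-contents of the two endpoints. First I would use this to evaluate the ingredients appearing on the right-hand sides and in the denominators. Summing $x$ over all descending arcs and $y$ over all ascending arcs gives $\sum_p x_p=\sum_{i=1}^d(q^{Y_i}-q^{X_i})=q^{Y_d}-Z$ and $\sum_q y_q=\sum_{i=1}^d(q^{X_i}-q^{Y_{i-1}})=Z-q^{Y_0}$; grouping the boxes of $[\la]$ by the arcs of their row and column (a box lies in $[\la]$ exactly when the ascending arc of its column precedes the descending arc of its row) upgrades this to $\sum_{(p,q)\in[\la]}x_py_q=S$. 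The same principle shows that the two co-hook denominators specialize to $x_{r+1}+\dots+x_{\ell}+y_1+\dots+y_s=q^{X_k}-q^{Y_0}$ (the arc from the bottom-left end of the rim up to the $k$-th corner) and $x_1+\dots+x_r+y_{s+1}+\dots+y_{\la_1}=q^{Y_d}-q^{X_k}$ (the complementary arc).

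The crux is to show that under this substitution
\[
{\prod}_{rs}\;=\;(q^{Y_0}-q^{X_k})\,(q^{X_k}-q^{Y_d})\,\pi_k(q).
\]
Here each factor $1+x_i/(\cdots)$ and $1+y_j/(\cdots)$ becomes a ratio of two $q$-content differences; the factors indexed by non-corner rows and columns cancel telescopically (consecutive equal row lengths force the numerator of one factor to equal the denominator of the next, an identity already visible before specialization), and the surviving corner factors, together with the prefactor $x_ry_s$, must reassemble into Kerov's $\pi_k$ times the two extreme factors $(q^{Y_0}-q^{X_k})$ and $(q^{X_k}-q^{Y_d})$. I expect this reassembly to be the main obstacle: the match is \emph{not} factor-by-factor (the row product alone does not equal Kerov's $i>k$ product), so it requires tracking the endpoint contents of every arc and careful bookkeeping of the surviving powers. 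I would verify it first on the staircase and hook shapes to fix the indexing before treating the general case.

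Finally I would substitute into Theorem~\ref{t:main}. Using the displayed evaluation of ${\prod}_{rs}$ and of the denominators, identity (a) becomes $\sum_k(q^{Y_0}-q^{X_k})(q^{X_k}-q^{Y_d})\pi_k=S$, which is Kerov~(d) after dividing by $S$; identity (b), after the cancellation $(q^{Y_0}-q^{X_k})/(q^{X_k}-q^{Y_0})=-1$, becomes $\sum_k(q^{Y_d}-q^{X_k})\pi_k=q^{Y_d}-Z$, i.e.\ Kerov~(c); identity (c) gives Kerov~(b) by the symmetric cancellation; and identity (d), where both denominator factors cancel against the two extreme factors, collapses to $\sum_k\pi_k=1$, which is Kerov~(a). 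Thus the four parts of Theorem~\ref{t:main} yield the four identities of Kerov in the reversed order (d), (c), (b), (a), completing the proof.
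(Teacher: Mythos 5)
Your overall strategy---specialize the four identities of Theorem~\ref{t:main} so that the weighted sums telescope into differences of $q$-powers, then recover Kerov's identities in the reversed order (a)$\leftrightarrow$(d), (b)$\leftrightarrow$(c)---is the paper's strategy, and your accounting of which identity yields which is correct. The genuine gap is in \emph{where you put the data}. You require the given $X_1,\dots,X_d$ and $Y_0,\dots,Y_d$ to \emph{be} the contents of the removable and addable corners of an actual diagram $[\la]$, forcing the weights to be content differences $x_i=q^{\la_i-i+1}-q^{\la_i-i}$, $y_j=q^{j-\la'_j}-q^{j-\la'_j-1}$. But the corollary is stated for arbitrary reals subject only to interlacing, and such sequences are almost never corner contents: contents are integers, and the corner contents of any partition satisfy the linear relation $\sum_{j=0}^{d}Y_j=\sum_{i=1}^{d}X_i$. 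So even if every computation in your sketch were completed, you would have proved Kerov's identities only at integer parameter values lying on the hypersurface $\prod_{j}q^{Y_j}=\prod_i q^{X_i}$, and a rational identity verified only there does not extend off that hypersurface without a further argument (for instance, invariance of each identity under the common rescaling $q^{X_i}\mapsto t\ts q^{X_i}$, $q^{Y_j}\mapsto t\ts q^{Y_j}$, combined with Zariski-density of the realizable points inside the hypersurface)---an argument you never supply. The paper turns your construction inside out and avoids the issue entirely: it fixes the \emph{shape} to be the staircase $(d,d-1,\dots,1)$, whose $d$ corners match Kerov's $d$ terms, and encodes the arbitrary reals in the \emph{free} weights $x_i=q^{X_i}-q^{Y_{i-1}}$, \ $y_j=q^{Y_{d+1-j}}-q^{X_{d+1-j}}$. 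Since Theorem~\ref{t:main} holds for arbitrary commuting variables, no integrality or sum constraint ever enters, and the telescoping you want (e.g.\ $x_{i+1}+\dots+x_r+y_{s+1}+\dots+y_{\la_i}=q^{X_k}-q^{X_i}$) is immediate.

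A second, smaller gap: your crux identity ${\prod}_{rs}=(q^{Y_0}-q^{X_k})(q^{X_k}-q^{Y_d})\,\pi_k(q)$ is asserted, not proved---you yourself flag the ``reassembly'' as the main obstacle. For the record, it is true in your setting: run-by-run telescoping along the rim gives
$${\prod}_{rs}\;=\;\bigl(q^{Y_k}-q^{X_k}\bigr)\bigl(q^{X_k}-q^{Y_{k-1}}\bigr)\,\prod_{j=k+1}^{d}\frac{q^{Y_j}-q^{X_k}}{q^{X_j}-q^{X_k}}\;\prod_{j=1}^{k-1}\frac{q^{X_k}-q^{Y_{j-1}}}{q^{X_k}-q^{X_j}}\,,$$
in which every $Y_j$, $0\le j\le d$, appears exactly once but paired with the ``wrong'' $X$'s relative to $\pi_k$; passing to Kerov's pairing flips all $d+1$ numerator factors and all $d-1$ denominator factors, costing $(-1)^{d+1}$ and $(-1)^{d-1}$ respectively, which cancel. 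So the reassembly is a finite sign count rather than a real obstruction. Under the paper's staircase substitution even this step is lighter: each row and column product telescopes in a single run, and the discrepancy with $\pi_k$ is a mere shift of $Y$-indices absorbed by the prefactor $\frac{(q^{Y_{k-1}}-q^{X_k})(q^{X_k}-q^{Y_k})}{(q^{Y_0}-q^{X_k})(q^{X_k}-q^{Y_d})}$, with no signs to track. The upshot: keep your telescoping framework, but move the data from the shape into the weights; as written, your proof establishes the corollary only on a constrained integer subfamily of the stated hypotheses.
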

\begin{proof}
 The formulas follow by setting
 $$x_i = q^{X_i} - q^{Y_{i-1}} \qquad y_j = q^{Y_{d+1-j}}-q^{X_{s+1-j}}$$
 and taking equations (a)--(d) from Theorem \ref{t:main} for the staircase partition $\lambda = (d,d-1,\ldots,1)$.

 In (a), let $r = k$, $s = d+1-k$, $\lambda_i = d+1-i$, $\lambda'_j = d+1-j$. We have
 $$x_{i+1}+\ldots + x_{r} = \left(q^{X_{i+1}} - q^{Y_{i}}\right)+ \ldots + \left(q^{X_k}  - q^{Y_{k-1}}\right)$$
 and
 $$y_{s+1} + \ldots + y_{\lambda_i} = \left(q^{Y_{i}} - q^{X_{i}}\right) + \ldots + \left(q^{Y_{k-1}}  - q^{X_{k-1}}\right).$$
 That means that
 $$\prod_{i = 1}^{r-1} {\textstyle \left(1+ \frac{x_i}{x_{i+1}+\ldots + x_{r} + y_{s+1} + \ldots + y_{\lambda_i}}\right)} = \prod_{i = 1}^{k-1} {\textstyle \left(1+ \frac{q^{X_i} - q^{Y_{i-1}}}{q^{X_k}-q^{X_{i}}}\right)} = \prod_{i = 1}^{k-1} {\textstyle \frac{q^{Y_{i-1}} - q^{X_{k}}}{q^{X_i}-q^{X_{k}}}}.$$
 Similarly,
 $$\prod_{j = 1}^{s-1} {\textstyle \left(1+ \frac{y_j}{x_{r+1}+\ldots + x_{\lambda'_j} + y_{j+1} + \ldots + y_{s}}\right)} = \prod_{j=1}^{d-k} {\textstyle \frac{q^{Y_{d+1-j}} - q^{X_k}}{q^{X_{d+1-j}}-q^{X_{k}}}} =  \prod_{i=k+1}^d {\textstyle \frac{q^{X_k} - q^{Y_{i}}}{q^{X_{k}}-q^{X_i}}}.$$
 We also have
 $$\textstyle \frac{x_r y_s}{(x_{r+1}+\ldots+x_{\ell(\lambda)}+y_1+ \ldots + y_s)(x_{1}+\ldots+x_{r}+y_{s+1}+ \ldots + y_{\lambda_1})} = \frac{\left(q^{X_{k}}-q^{Y_{k-1}}\right)\left(q^{Y_{k}}-q^{X_{k}}\right)}{\left(q^{Y_{d}}-q^{X_{k}}\right)\left(q^{X_{k}}-q^{Y_{0}}\right)}.$$
 Together with the identity (d) in Theorem \ref{t:main}, this implies
 $$1 = \sum_{k=1}^d {\textstyle \frac{\left(q^{Y_{k-1}}-q^{X_{k}}\right)\left(q^{X_{k}}-q^{Y_{k}}\right)}{\left(q^{X_{k}}-q^{Y_{d}}\right)\left(q^{Y_{0}}-q^{X_{k}}\right)}} \prod_{i = 1}^{k-1} {\textstyle \frac{q^{Y_{i-1}} - q^{X_{k}}}{q^{X_i}-q^{X_{k}}}} \prod_{i=k+1}^d {\textstyle \frac{q^{X_k} - q^{Y_{i}}}{q^{X_{k}}-q^{X_i}}} = $$
 $$= \sum_{k=1}^d \prod_{i = 1}^{k-1} {\textstyle \frac{q^{Y_{i}} - q^{X_{k}}}{q^{X_i}-q^{X_{k}}}} \prod_{i=k+1}^d {\textstyle \frac{q^{X_k} - q^{Y_{i-1}}}{q^{X_{k}}-q^{X_i}}} = \sum_{k=1}^d \pi_k(q),$$
 as desired. The proof of identities (b)--(d) follows the same lines.
\end{proof}

\section{Weighted and modified hook walks}\label{s:walk}

\subsection{Weighted hook walk.} Fix a partition $\lambda$ and positive weights $x_1,\ldots,x_{\lambda'_1},y_1,\ldots,y_{\lambda_1}$. Consider
the following combinatorial random process. Select the starting square $(i,j) \in [\lambda]$ with probability
proportional to $x_i y_j$. At each step, move from square $(i,j)$ to a
random square in $H_{ij} \setminus \{(i,j)\}$ so that the probability of moving to the square $(k,j)$, $i < k \leq \lambda'_j$, is proportional
to $x_{k}$, and the probability of moving to the square $(i,l)$, $j < l \leq \lambda_i$, is proportional to $y_{l}$. When we reach a corner,
the process ends. We call this a \emph{weighted hook walk}.

\begin{thm} \label{thm:whw}
 The probability that the weighted hook walk stops in the corner $(r,s)$ of $\lambda$ is equal to
 $$\frac {x_r \ts y_s}{\sum_{(p,q) \in [\la]} \. x_p \ts y_q}  
 \prod_{i=1}^{r-1} \. \left({\textstyle 1+ \frac{x_i}{x_{i+1}+\ldots + x_{r} \ts + \ts
  y_{s+1} + \ldots + y_{\lambda_i}}}\right) 
  \prod_{j=1}^{s-1} \. \left({\textstyle 1+ \frac{y_j}{x_{r+1}+\ldots + x_{\lambda'_j} \ts + \ts
  y_{j+1} + \ldots + y_{s}}}\right)$$
\end{thm}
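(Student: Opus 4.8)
The plan is to adapt the Green--Nijenhuis--Wilf induction to the weighted setting. First I would condition on the starting square. Writing $\Sigma = \sum_{(p,q)\in[\la]} x_p\ts y_q$ and letting $Q_{pq}(r,s)$ be the probability that the walk started at $(p,q)$ terminates at the corner $(r,s)$, the starting rule gives $\Sigma\cdot\Pr[\text{stop at }(r,s)] = \sum_{(p,q)} x_p\ts y_q\ts Q_{pq}(r,s)$. It is convenient to package all starting points at once: for a square $(i,j)\in[\la]$ set $g(i,j) = \sum_{(p,q)} x_p\ts y_q\ts\Pr[\text{walk from }(p,q)\text{ visits }(i,j)]$, and abbreviate $D_{ij} = x_{i+1}+\ldots+x_{\lambda'_j}+y_{j+1}+\ldots+y_{\lambda_i}$ for the total weight of $H_{ij}\setminus\{(i,j)\}$. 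Since a corner is only ever visited at the final step, $g(r,s)=\Sigma\cdot\Pr[\text{stop at }(r,s)]$ for $(r,s)\in\cor[\la]$, so the theorem is equivalent to the claim that
$$g(r,s)\, =\, x_r\ts y_s \prod_{i=1}^{r-1}\Bigl(1+\tfrac{x_i}{D_{is}}\Bigr)\prod_{j=1}^{s-1}\Bigl(1+\tfrac{y_j}{D_{rj}}\Bigr)\qquad\text{for } (r,s)\in\cor[\la].$$

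Second, I would record the linear recursion satisfied by $g$. A walk reaches $(i,j)$ either by starting there, by a vertical step from some $(i',j)$ with $i'<i$, or by a horizontal step from some $(i,j')$ with $j'<j$; accounting for the weight of that unique incoming step gives
$$g(i,j)\, =\, x_i\ts y_j \,+\, x_i\sum_{i'<i}\frac{g(i',j)}{D_{i'j}} \,+\, y_j\sum_{j'<j}\frac{g(i,j')}{D_{ij'}}.$$
The corner formula is then proved, as in~\cite{GNW}, by induction, after expanding the two products over subsets: $\prod_{i=1}^{r-1}(1+\frac{x_i}{D_{is}}) = \sum_{A\subseteq[r-1]}\prod_{i\in A}\frac{x_i}{D_{is}}$, and symmetrically for the columns. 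The monomial indexed by $(A,B)$ is to be identified with the total weight of those walks whose vertical steps into column $s$ occur in precisely the rows of $A$ and whose horizontal steps into row $r$ occur in precisely the columns of $B$: the factor $1$ records that a row (resp.\ column) is skipped, and $\frac{x_i}{D_{is}}$ records a step associated with row $i$. The ``$1$''s are exactly the device that converts the individual transition probabilities into the clean product.

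The main obstacle --- and the reason the induction of~\cite{GNW} is delicate --- is that the denominators $D_{ij}$ are \emph{global}: they measure the full hook in $[\la]$, extending below row $r$ and to the right of column $s$, even though every trajectory ending at $(r,s)$ stays inside the rectangle $[1,r]\times[1,s]$. As a consequence $g$ has \emph{no} closed product form at interior squares --- one already sees this fail at the square $(2,2)$ of the $3\times 2$ rectangle $\la=(2,2,2)$ --- so the induction cannot merely propagate a formula square by square. The telescoping that produces the factors $(1+\frac{x_i}{D_{is}})$ must instead be organized so as to survive the irregular boundary encoded by $\lambda'_j$ and $\lambda_i$. This is exactly the point at which the \emph{modified hook walk} of Section~\ref{s:walk} enters: its transition weights are arranged so that the squares of column $s$ and row $r$ behave like corners of an auxiliary shape, which lets the induction close. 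Combining the reduction of the modified walk to the weighted walk (established in Section~\ref{s:walk}) with this induction then completes the proof; I expect the bulk of the real work to lie in verifying that the weight-preserving matching of the subset expansion with the walk profiles is compatible with these global denominators.
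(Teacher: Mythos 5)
Your skeleton---conditioning on the starting square, expanding $\prod_{i=1}^{r-1}\bigl(1+\frac{x_i}{D_{is}}\bigr)$ over subsets $A\subseteq\{1,\dots,r-1\}$, and matching monomials with the row/column projection profiles of the walk---is exactly the paper's (GNW-style) setup. But the entire content of the proof is a lemma you never state or prove, and your diagnosis of where the difficulty lies is mistaken. The paper's key lemma says: conditional on starting at $(i_1,j_1)$, the probability that the walk has vertical projection $I=\{i_1,\dots,r\}$ and horizontal projection $J=\{j_1,\dots,s\}$ equals
$$\frac{\prod_{i\in I\setminus\{i_1\}}x_i}{\prod_{i\in I\setminus\{r\}}\bigl(x_{i+1}+\cdots+x_r+y_{s+1}+\cdots+y_{\lambda_i}\bigr)}\cdot
\frac{\prod_{j\in J\setminus\{j_1\}}y_j}{\prod_{j\in J\setminus\{s\}}\bigl(x_{r+1}+\cdots+x_{\lambda'_j}+y_{j+1}+\cdots+y_s\bigr)}.$$
The ``global'' denominators are not an obstruction to proving this by induction. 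The point of conditioning on the projection sets (rather than working with your function $g$, which only records that a square is visited) is that a walk with these projections makes its first step either to $(i_2,j_1)$ or to $(i_1,j_2)$, and the induction on $|I|+|J|$ closes by a two-term telescoping resting on the exact identity
$$\bigl(x_{i_1+1}+\cdots+x_r+y_{s+1}+\cdots+y_{\lambda_{i_1}}\bigr)+\bigl(x_{r+1}+\cdots+x_{\lambda'_{j_1}}+y_{j_1+1}+\cdots+y_s\bigr)
\,=\,x_{i_1+1}+\cdots+x_{\lambda'_{j_1}}+y_{j_1+1}+\cdots+y_{\lambda_{i_1}},$$
i.e., the full hook weight at the starting square splits exactly into the two ``partial'' denominators appearing in the conditional formula (here one uses that the corner $(r,s)$ lies in the hook of $(i_1,j_1)$, so $r\le\lambda'_{j_1}$ and $s\le\lambda_{i_1}$). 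No product formula for $g$ at interior squares is ever needed, so your observation about $\lambda=(2,2,2)$, even if accurate, is beside the point: the induction runs over the projections of the walk, not square by square over the diagram.

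The more serious error is your appeal to the modified hook walk to rescue the induction. In the paper the logical dependence runs the other way: the modified walk is analyzed by observing that it coincides with the \emph{weighted} hook walk on the staircase shape $(c,c-1,\dots,1)$ with aggregated weights, and then invoking Theorem~\ref{thm:whw}; it is a corollary of the theorem, not an ingredient of its proof. Using it as you propose would be circular (it already requires the theorem, at least for staircases), and it could not suffice in any case: the modified walk is a genuinely different Markov chain---its transition set is not the hook---so no statement about it determines the stopping distribution of the weighted hook walk on a general shape. As written, your proposal supplies the correct combinatorial bookkeeping, defers the analytic core (``I expect the bulk of the real work\dots''), and then proposes a mechanism for that core which cannot work; the missing piece is precisely the conditional-probability lemma and its telescoping induction above.
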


Note that the sum of these products over all $(r,s) \in \cor[\lambda]$ is equal to the ratio of the left-hand side and the right-hand side of Theorem~\ref{t:main}, part~(a). Since the sum of these probabilities over all corners is equal to $1$, we conclude that Theorem~\ref{thm:whw} implies~(WHL). We prove Theorem \ref{thm:whw} in the next section by an inductive argument. From above, this gives an alternative proof of~(WHL).

\subsection{Modified weighted hook walk.} Take a square $(i,j)$ in $[\la]$, and
find the corner $(r_1,s_1)$ with the smallest $r_1$ satisfying $r_1 \geq i$, and the
corner $(r_2,s_2)$ with the smallest $s_2$ satisfying $s_2 \geq j$. The \emph{modified hook}
is the set $\{ (k,j) \colon r_1 < k \leq \lambda'_j\} \cup \{ (i,l) \colon s_2 < l \leq \lambda_i \}$.
An example is given in Figure \ref{f:mod}.

\begin{figure}[hbt]
\begin{center}
\epsfig{file=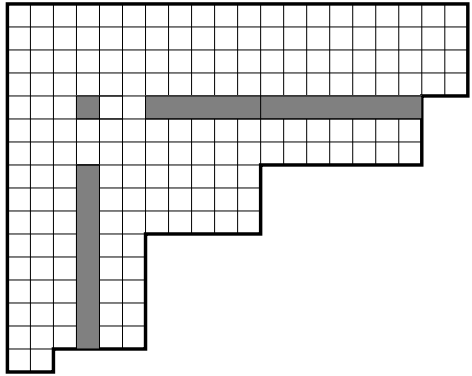,height=5cm}
\end{center}
\caption{The square $(5,4)$ of the diagram and its modified hook of length~$20$ in the partition $(20,20,20,20,18,18,18,11,11,11,6,6,6,6,6,2)$.}
\label{f:mod}
\end{figure}

Recall that we have positive weights $x_1,\ldots,x_{\lambda'_1},y_1,\ldots,y_{\lambda_1}$.
Select the starting square $(i,j) \in [\lambda]$ with probability
proportional to $x_i y_j$. At each step, move from square $(i,j)$ to a
random square in the modified hook so that the probability of moving to the square $(k,j)$ is proportional
to $x_{k}$, and the probability of moving to the square $(i,l)$ is proportional to $y_{l}$. When we reach a corner,
the process ends. We call this a \emph{modified weighted hook walk}.

If $\lambda$ has $c$ corners, there are $c$ different parts of $\lambda$, and also $c$
different parts of $\lambda'$. Take the ordered set partition $(U_1,\ldots,U_c)$ of the
set $\{1,2,\ldots,\lambda'_1\}$ so that $i$ and $j$ are in the same subset if and only
$\lambda_i = \lambda_j$, and so that the elements of the set $U_k$ are smaller than the
elements of the set $U_l$ if $k < l$. Then define $X_k$ as the sum of the elements of $U_k$. Similarly,
take the ordered set partition $(V_1,\ldots,V_c)$ of the set $\{1,2,\ldots,\lambda_1\}$
so that $i$ and $j$ are in the same subset if and only $\lambda'_i = \lambda'_j$, and
so that the elements of the set $V_k$ are smaller than the elements of the set $V_l$ if $k < l$.
Then define $Y_k$ as the sum of the elements of $U_k$.

In the example given in Figure \ref{f:XY}, we have $X_1 = x_{1}+x_{2}+x_{3}+x_{4}, X_2 = x_{5}+x_{6}+x_{7}, X_3 = x_{8}+x_{9}+x_{10}, X_4 = x_{11}+x_{12}+x_{13}+x_{14}+x_{15}+, X_5 = x_{16}$, $Y_1 = y_{1}+y_{2}, Y_2 = y_{3}+y_{4}+y_{5}+y_{6}, Y_3 = y_{7}+y_{8}+y_{9}+y_{10}+y_{11}, Y_4 = y_{12}+y_{13}+y_{14}+y_{15}+y_{16}+y_{17}+y_{18}$, and $Y_5 = y_{19}+y_{20}$.

\begin{figure}[hbt]

\begin{center}
\epsfig{file=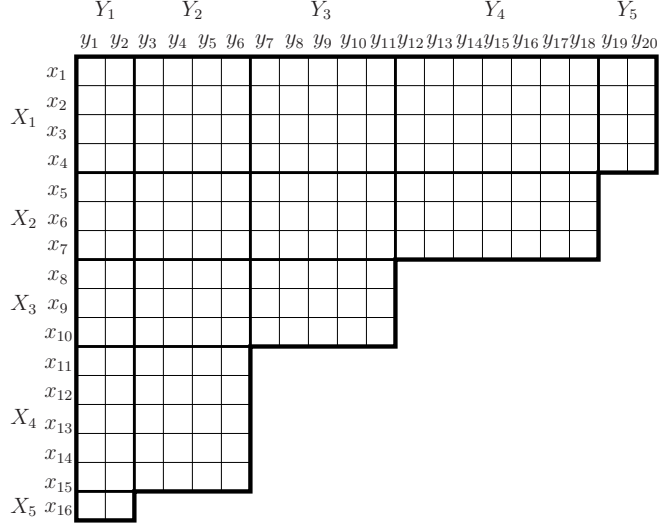,height=7cm}
\end{center}
\caption{The partition $(20,20,20,20,18,18,18,11,11,11,6,6,6,6,6,2)$ and corresponding sums $X_1,\ldots,X_5,Y_1,\ldots,Y_5$.}
\label{f:XY}
\end{figure}

Let us number the corners so that the top right corner is the first and the bottom left corner is the last.

\begin{thm}
 The probability that a modified weighted hook walk ends in corner $r$ is equal to
 $$\frac {X_r \ts Y_s}{\sum_{(p,q) \in [\la]} \. x_p \ts y_q}  
 \prod_{i=1}^{r-1} \. \left({\textstyle 1+ \frac{X_i}{X_{i+1}+\ldots + X_{r} \ts + \ts
 Y_{s+1} + \ldots + Y_{c+1-i}}}\right) 
 \prod_{j=1}^{s-1} \. \left({\textstyle 1+ \frac{Y_j}{X_{r+1}+\ldots + X_{c+1-j} \ts + \ts
 Y_{j+1} + \ldots + Y_{s}}}\right),$$
 where $s = c + 1 - r$.
\end{thm}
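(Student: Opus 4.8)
The plan is to recognize the asserted formula as exactly the weighted hook walk probability of Theorem~\ref{thm:whw}, but for the \emph{staircase} partition $\mu=(c,c-1,\ldots,1)$ with the aggregated weights $X_1,\dots,X_c$ on rows and $Y_1,\dots,Y_c$ on columns: the upper limits $c+1-i$ and $c+1-j$ in the formula are precisely $\mu_i$ and $\mu'_j$, and corner $r$ of $\lambda$ is matched with the corner $(r,s)$, $s=c+1-r$, of $\mu$. Thus it suffices to show that, under a natural projection, the modified weighted hook walk on $[\lambda]$ becomes the ordinary weighted hook walk on $[\mu]$ with weights $X,Y$, at which point Theorem~\ref{thm:whw} applies verbatim.

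First I would define the projection $\pi\colon[\lambda]\to[\mu]$ by $\pi(i,j)=(a,b)$, where $i\in U_a$ and $j\in V_b$. Writing $u_a=|U_a|$ and $v_b=|V_b|$, a short computation with the block sizes shows that rows in $U_a$ have common length $\max V_{c+1-a}=v_1+\dots+v_{c+1-a}$ while columns in $V_b$ have common height $\max U_{c+1-b}=u_1+\dots+u_{c+1-b}$; in particular the corner closing $U_a$ is $(\max U_a,\max V_{c+1-a})$, and these corners biject with those of $\mu$, corner $r$ going to $(r,c+1-r)$. From this, $(i,j)\in[\lambda]$ with $i\in U_a$, $j\in V_b$ forces $a+b\le c+1$, so $\pi$ lands in $[\mu]$ and the fiber over $(a,b)\in[\mu]$ is exactly $U_a\times V_b$. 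The weights aggregate correctly: $\sum_{(i,j)\in\pi^{-1}(a,b)}x_iy_j=X_aY_b$, so the starting distribution projects to the staircase starting distribution, and $\sum_{(p,q)\in[\mu]}X_pY_q=\sum_{(i,j)\in[\lambda]}x_iy_j$ supplies the common normalizing denominator.

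The heart of the argument is to check that $\pi$ carries the modified hook at $(i,j)$ onto the ordinary hook of $(a,b)=\pi(i,j)$ in $[\mu]$, and that the resulting transition probabilities are block-constant. At $(i,j)$ with $i\in U_a$ the corner with smallest row $\ge i$ is the one closing $U_a$, so $r_1=\max U_a$; combined with $\lambda'_j=\max U_{c+1-b}$, the vertical leg $\{(k,j)\colon r_1<k\le\lambda'_j\}$ consists of the rows in blocks $U_{a+1},\dots,U_{c+1-b}$, which projects to $\{(a',b)\colon a<a'\le c+1-b\}$, exactly the vertical leg of the hook of $(a,b)$ in $\mu$. Symmetrically $s_2=\max V_b$ and $\lambda_i=\max V_{c+1-a}$, so the horizontal leg projects to $\{(a,b')\colon b<b'\le c+1-a\}$. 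Moreover the probability of the next step landing in block-row $a'$ equals $\sum_{k\in U_{a'}}x_k=X_{a'}$, and in block-column $b'$ equals $\sum_{l\in V_{b'}}y_l=Y_{b'}$, both taken relative to the total $\sum_{a<a'\le c+1-b}X_{a'}+\sum_{b<b'\le c+1-a}Y_{b'}$, which depends only on $(a,b)$.

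Finally I would invoke strong lumpability (the standard lumping criterion for finite Markov chains): since these transition probabilities depend only on the block $(a,b)$ and not on the particular square in its fiber, the projected process is itself a Markov chain, namely the weighted hook walk on $[\mu]$ with row weights $X_i$ and column weights $Y_j$. Hence the probability that the modified walk ends in corner $r$ of $\lambda$ equals the probability that the staircase walk ends in the corner $(r,c+1-r)$ of $\mu$, and Theorem~\ref{thm:whw} applied to $\mu$ with weights $X,Y$ yields exactly the asserted formula. The main obstacle is the block bookkeeping of the previous paragraph, namely pinning down $r_1,s_2,\lambda'_j,\lambda_i$ in terms of the block sizes so as to confirm that the modified hook is genuinely block-constant; once that is in hand, the reduction to Theorem~\ref{thm:whw} is automatic.
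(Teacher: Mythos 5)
Your proposal is correct and follows exactly the paper's own route: the paper proves this theorem by observing that the modified weighted hook walk is equivalent to the ordinary weighted hook walk on the staircase $(c,c-1,\ldots,1)$ with row weights $X_k$ and column weights $Y_l$, then invoking Theorem~\ref{thm:whw} together with the equality $\sum_{p+q\le c+1}X_pY_q=\sum_{(p,q)\in[\la]}x_py_q$. The only difference is that you spell out the block bookkeeping and the lumpability argument that the paper leaves as a one-sentence observation.
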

\begin{proof}
 Observe that the modified weighted hook walk is equivalent to the (ordinary) weighted hook
 walk on the staircase shape $(c,c-1,\ldots,1)$, where the $k$-th row is weighted by the sum$X_k$,
 and the $l$-th column is weighted by the sum $Y_l$. The formula then follows  from Theorem
 \ref{thm:whw} and the equality $\sum_{p+q \leq c+1} X_p \ts Y_q =
 \sum_{(p,q) \in [\la]} \. x_p \ts y_q$.
\end{proof}

\vskip.8cm

\section{The hook walk proof}\label{s:walk-proof}

What follows is an adaptation of the Greene-Nijenhuis-Wilf proof \cite{GNW}. Assume that the random process is $(i_1,j_1) \to (i_2,j_2) \to \ldots \to (r,s)$.
Then let $I = \{i_1,i_2,\ldots,r\}$ and $J = \{j_1,j_2,\ldots,s\}$ be its vertical and horizontal projections.

\begin{lemma} \label{lemma}
 The probability that the vertical and horizontal projections are
 $I$ and $J$, conditional on starting at $(i_1,j_1)$, is
 $$ \frac {\prod_{i \in I \setminus \{i_1 \}}x_i} {\prod_{i \in I \setminus \{r \}} (x_{i+1}+\ldots + x_{r} \ts + \ts y_{s+1} + \ldots + y_{\lambda_i})} \cdot  \frac {\prod_{j \in J \setminus \{j_1\}}y_j} {\prod_{j \in J \setminus \{s\}} (x_{r+1}+\ldots + x_{\lambda'_j} \ts + \ts
 y_{j+1} + \ldots + y_{s})}.$$
\end{lemma}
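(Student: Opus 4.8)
The plan is to follow the Greene-Nijenhuis-Wilf strategy (cf.~\cite{GNW}), reducing the conditional probability to a purely combinatorial summation over lattice paths. Write $I = \{i_1 = a_1 < a_2 < \ldots < a_t = r\}$ and $J = \{j_1 = b_1 < b_2 < \ldots < b_u = s\}$, and for a square $(i,j)$ let $D_{ij} = (x_{i+1}+\ldots+x_{\lambda'_j}) + (y_{j+1}+\ldots+y_{\lambda_i})$ denote the total weight of $H_{ij}\setminus\{(i,j)\}$, so that from $(i,j)$ the walk steps to $(k,j)$ with probability $x_k/D_{ij}$ and to $(i,l)$ with probability $y_l/D_{ij}$.

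First I would observe that a walk with projections exactly $I$ and $J$ is forced to make its vertical steps along $a_1 \to a_2 \to \ldots \to a_t$ and its horizontal steps along $b_1 \to b_2 \to \ldots \to b_u$, each step to the immediate successor. Indeed, the row coordinate is nondecreasing along the walk and can never return to a skipped value, so a vertical step from row $a_k$ must land on $a_{k+1}$ (this square lies in $[\lambda]$ because $b_l \le s$ forces $\lambda'_{b_l} \ge r \ge a_{k+1}$), and symmetrically for columns. Hence such walks are in bijection with shuffles of $t-1$ vertical and $u-1$ horizontal steps, equivalently with monotone lattice paths $P$ from $(1,1)$ to $(t,u)$ in $\{1,\ldots,t\}\times\{1,\ldots,u\}$. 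Along any such $P$ the numerators of the step probabilities always contribute $\prod_{k=2}^t x_{a_k}\cdot\prod_{l=2}^u y_{b_l} = \prod_{i\in I\setminus\{i_1\}} x_i\cdot\prod_{j\in J\setminus\{j_1\}} y_j$, independently of $P$, while the denominators contribute $\prod_{(k,l)\in P\setminus\{(t,u)\}} D_{a_k,b_l}$.

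The key structural observation, on which everything turns, is that for every visited square (all of which satisfy $a_k\le r$, $b_l\le s$) one has the additive splitting $D_{a_k,b_l} = R_k + C_l$, where $R_k := D_{a_k,s} = (x_{a_k+1}+\ldots+x_r)+(y_{s+1}+\ldots+y_{\lambda_{a_k}})$ and $C_l := D_{r,b_l} = (x_{r+1}+\ldots+x_{\lambda'_{b_l}})+(y_{b_l+1}+\ldots+y_s)$; here I use that $(r,s)$ is a corner, so $\lambda'_s = r$ and $\lambda_r = s$, which also yields $R_t = C_u = 0$. These $R_k$, $C_l$ are exactly the factors in the claimed denominator, since as $i$ ranges over $I\setminus\{r\}$ and $j$ over $J\setminus\{s\}$ the quantities $D_{i,s}$ and $D_{r,j}$ run through $R_1,\ldots,R_{t-1}$ and $C_1,\ldots,C_{u-1}$. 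Factoring out the common numerator, it remains to prove the shuffle identity
$$\sum_{P}\ \prod_{(k,l)\in P\setminus\{(t,u)\}} \frac{1}{R_k + C_l}\ =\ \frac{1}{\prod_{k=1}^{t-1} R_k\cdot\prod_{l=1}^{u-1} C_l},$$
where the sum is over monotone lattice paths from $(1,1)$ to $(t,u)$.

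Finally I would establish this identity by downward induction. Let $\phi(k,l)$ be the sum over paths from $(k,l)$ to $(t,u)$ of $\prod \frac{1}{R_{k'}+C_{l'}}$ over all points but $(t,u)$; peeling off the first step gives $\phi(k,l) = (R_k+C_l)^{-1}\bigl(\phi(k+1,l)+\phi(k,l+1)\bigr)$ with $\phi(t,u)=1$. I claim $\phi(k,l) = \bigl(\prod_{k'=k}^{t-1} R_{k'}\prod_{l'=l}^{u-1} C_{l'}\bigr)^{-1}$: for this expression $\phi(k+1,l)=R_k\,\phi(k,l)$ and $\phi(k,l+1)=C_l\,\phi(k,l)$, so the recursion collapses to $(R_k+C_l)^{-1}(R_k+C_l)\phi(k,l)=\phi(k,l)$, and the boundary rows and columns (where $k=t$ or $l=u$, using $R_t=C_u=0$) are checked directly. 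Evaluating at $\phi(1,1)$ gives the shuffle identity and hence the lemma. The only genuine subtlety is the splitting $D_{a_k,b_l}=R_k+C_l$; once it is in hand the summation telescopes exactly as in the unweighted hook walk, and that verification is the step I would write out most carefully.
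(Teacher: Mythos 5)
Your proposal is correct and is in essence the paper's own proof: the paper establishes the lemma by induction on $|I|+|J|$, conditioning on the first step of the walk and collapsing the recursion via the identity $(x_{i_1+1}+\ldots+x_r+y_{s+1}+\ldots+y_{\lambda_{i_1}})+(x_{r+1}+\ldots+x_{\lambda'_{j_1}}+y_{j_1+1}+\ldots+y_s)=x_{i_1+1}+\ldots+x_{\lambda'_{j_1}}+y_{j_1+1}+\ldots+y_{\lambda_{i_1}}$, which is exactly your key splitting $D_{a_k,b_l}=R_k+C_l$. Your lattice-path/shuffle packaging, with the path-independent numerators factored out and the downward induction on $\phi(k,l)$, is the same recursion in different bookkeeping, so the two arguments coincide.
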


The lemma implies Theorem \ref{thm:whw}. Indeed, if we denote by $S$ the starting corner and by $F$ the final corner of the hook walk, then
$$\PP\big(F = (r,s)\big) = \sum_{(i_1,j_1) \in [\la]} \PP\big(S = (i_1,j_1)\big) \cdot \PP\big(F = (r,s) | S =  (i_1,j_1)\big) =$$
$$\sum_{i_1,j_1} \frac{x_{i_1}y_{j_1}}{\sum_{(p,q) \in [\la]} \. x_p \ts y_q}\left[{\textstyle \sum \frac {\prod_{i \in I \setminus \{i_1 \}}x_i} {\prod_{i \in I \setminus \{r \}} (x_{i+1}+\ldots + x_{r} \ts + \ts y_{s+1} + \ldots + y_{\lambda_i})} \cdot  \frac {\prod_{j \in J \setminus \{j_1\}}y_j} {\prod_{j \in J \setminus \{s\}} (x_{r+1}+\ldots + x_{\lambda'_j} \ts + \ts
  y_{j+1} + \ldots + y_{s})}}\right],$$
where the last sum is over $I,J$ satisfying $i_1 = \min I$, $r = \max I$, $j_1 = \min J$, $s = \max J$. Since
$$x_{i_1} \cdot \prod_{i \in I \setminus \{i_1 \}}x_i = x_r \cdot \prod_{i \in I \setminus \{r \}}x_i \qquad \mbox{and} \qquad y_{j_1} \cdot \prod_{j \in J \setminus \{j_1 \}}y_j = y_s \cdot \prod_{j \in J \setminus \{s \}}y_j,$$
this is equal to
$$\frac {x_r \ts y_s}{\sum_{(p,q) \in [\la]} \. x_p \ts y_q} \cdot \left[\sum \prod_{i \in I \setminus \{r\}} \. {\textstyle \frac{x_i}{x_{i+1}+\ldots + x_{r} \ts + \ts
y_{s+1} + \ldots + y_{\lambda_i}}} \cdot \prod_{j \in J \setminus \{s\}} \. {\textstyle \frac{y_j}{x_{r+1}+\ldots + x_{\lambda'_j} \ts + \ts
y_{j+1} + \ldots + y_{s}}}\right],$$
where the sum is over all $I,J$ with $r = \max I$, $s = \max J$. It is clear that this last product equals
$$\prod_{i=1}^{r-1} \. \left({\textstyle 1+ \frac{x_i}{x_{i+1}+\ldots + x_{r} \ts + \ts
y_{s+1} + \ldots + y_{\lambda_i}}}\right) \times \prod_{j=1}^{s-1} \. \left({\textstyle 1+ \frac{y_j}{x_{r+1}+\ldots + x_{\lambda'_j} \ts + \ts
y_{j+1} + \ldots + y_{s}}}\right).$$

\begin{proof}[Proof of Lemma \ref{lemma}]
 The proof is by induction on $|I| + |J|$. Denote the claimed probability by $\prod$. If $I = \{r\}$ and $J = \{s\}$, the probability is indeed $1$.
 For $|I| + |J| > 2$, we have
 $$\PP\big(I,J | S =  (i_1,j_1)\big) $$
 $$ = \frac{x_{i_2}}{x_{i_1+1}+\ldots + x_{\lambda'_{j_1}} \ts + \ts y_{j_1+1} + \ldots + y_{\lambda_{i_1}}} \cdot \PP\big(I \setminus \{i_1\},J | S = (i_2,j_1)\big)$$
 $$+ \frac{y_{j_2}}{x_{i_1+1}+\ldots + x_{\lambda'_{j_1}} \ts + \ts y_{j_1+1} + \ldots + y_{\lambda_{i_1}}} \cdot \PP\big(I ,J \setminus \{j_1\}| S =  (i_1,j_2)\big).$$
 By the induction hypothesis,
 $$\PP\big(I \setminus \{i_1\},J | S = (i_2,j_1)\big) = \frac{x_{i_1+1} + \ldots + x_r \ts + \ts y_{s+1} + \ldots + y_{\lambda_{i_1}}}{x_{i_2}} \prod,$$
 $$\PP\big(I,J \setminus \{j_1\} | S = (i_1,j_2)\big) = \frac{x_{r+1} + \ldots + x_{\lambda'_{j_1}} \ts + \ts y_{j_1+1} + \ldots + y_s}{y_{j_2}} \prod.$$
 Because $(x_{i_1+1} + \ldots + x_r \ts + \ts y_{s+1} + \ldots + y_{\lambda_{i_1}}) + (x_{r+1} + \ldots + x_{\lambda'_{j_1}} \ts + \ts y_{j_1+1} + \ldots + y_s) = x_{i_1+1}+\ldots + x_{\lambda'_{j_1}} \ts + \ts y_{j_1+1} + \ldots + y_{\lambda_{i_1}}$,
 it follows that $\PP\big(I,J | S =  (i_1,j_1)\big) = \prod$, which completes the proof.
\end{proof}

\section{Final remarks}\label{s:rem}

\subsection{ }\, \label{ss:rem-mot}
As Knuth wrote in~1973,  ``Since the hook-lengths formula is
such a simple result, it deserves a simple proof ...''
(see p.~63 of the first edition of~\cite{Knu}, cited also
in~\cite{Zei}).  Unfortunately, the desired simple proofs
have been sorely lacking.  It is our hope that
Section~\ref{s:bij} can be viewed as one such proof.

\subsection{ } \, \label{ss:rem-hist}
Surveying the history of the hook length formula is a
difficult task, even if one is restricted to purely
combinatorial proofs.  This is further complicated by the
ambiguity of the notions, since it is often unclear whether
a given technique is bijective or even combinatorial.  Below
we give a brief outline of some important developments,
possibly omitting a number of interesting and related
papers.\footnote{We apologize in advance to the authors of the
papers we do not mention; the literature is simply too big to
be fully surveyed here.}

The first breakthrough in the understanding of the role
of hooks was made by Hillman and Grassl in~\cite{HG}, where
they proved the (special case of) Stanley hook-content
formula by an elegant bijection.  It is well known that
this formula implies the hook length formula via the
$P$-partition theory~\cite[$\S 4$]{Sta} (see also~\cite{Pak}).
This approach was further developed
in \cite{BD,Gan2,KP,Kra1,Kra2,Kra3}.  Let us mention also
papers~\cite{Gan1,Pak}, where the connection to the
Robinson-Schensted-Knuth correspondence (see e.g.~\cite[$\S 7$]{Sta})
was established, and a recent follow up~\cite{BFP} with further
variations and algorithmic applications.

The next direction came in~\cite{GNW}, where an inductive proof
was established based on an elegant probabilistic argument.
This in turn inspired a number of further developments, including
\cite{GH,GNW-another,Ker1,Ker2}, and most recently~\cite{CLPS,CKP}.
In fact, the underlying hook length identities leading to
the proof have been also studied directly, without the probabilistic
interpretation; we refer to~\cite{Ver} and later developments~\cite{Ban,GN,Ker2,Kir}.
Needless to say, our two proofs can be viewed as direct descendants
of these two interrelated approaches.

As we mentioned in the
introduction, an important breakthrough was made by Zeilberger, who
found a ``direct bijectation'' of the GNW hook walk proof~\cite{Zei}.
In fact, his proof has several similar bijective steps as our
proof, but differs in both in technical details and the general scheme, being an
involved bijection of~(HLF) rather than~(BRHL).

Historically, the first bijective proof of the hook length formula
is due to Remmel~\cite{Rem} (see also~\cite{RW}). Essentially, he
uses the standard algebraic proof of Young (of the Frobenius-Young
product formula for $\dim \pi_\la$) and the Frame-Robinson-Thrall
argument, and replaces each step with a bijective version (sometimes
by employing new bijections and at one key step he uses the
Gessel-Viennot involution on intersecting paths~\cite{GV}).  He then
repeatedly applies the celebrated Garsia-Milne involution
principle to obtain an ingenious but completely intractable bijection
(a related approach was later outlined in~\cite{GV} as well).

Finally, there are two direct bijective proofs of the hook length
formula:~\cite{FZ} and~\cite{NPS}, both of which are highly non-trivial,
with the second using a variation on the jeu-de-taquin algorithm
(see~\cite[$\S 7$]{Sta}).  We refer to~\cite{Sag} for a nice
and careful presentation of the NPS bijection, and
to~\cite{Knu} for an elegant concise version.

\subsection{ } \, There are several directions in which our results
can be potentially extended.  First, it would be interesting to obtain the
analogues of our results for the  \emph{shifted} Young diagrams
and Young tableaux, for which there is an analogue of the
hook length formula due to Thrall~\cite{Thr} (see also~\cite{Sag}).
We refer to~\cite{Ban,Fis,Kra1,Sri} for other proofs
of the HLF in this case, and, notably, to~\cite{Sag-shifted}
for the shifted hook walk proof. We intend to return to this problem in the future.
Let us mention that a weighted version of the branching rule for trees is completely
straightforward.

Extending to semi-standard and skew tableaux is another possibility,
in which case one would be looking for a weighted analogue of
Stanley's hook-content formula~\cite{Sta} (see also~\cite{Mac}).

In a different direction, the weighted analogue of the ``complementary hook walk''
in~\cite{GNW-another} was discovered recently by the second author~\cite{kon}. 
The paper~\cite{GNW-another} is based on the observation that the Burnside identity
$$\sum_{\lambda \vdash n} |\syt(\lambda)|^2 = n!$$
is equivalent to the identity
$$\prod_{\bs \in [\la]} \, (h_\bs+1) \, = \,
\sum_{(r,s) \in \cor'[\la]} \.  \prod_{i=1}^{r-1} \. h_{is}
\. \prod_{j=1}^{s-1} \. h_{rj} \, \prod_{\bs \in \dor'_{rs}[\la]}
(h_\bs+1) \,,$$
where $\cor'[\la]$ is the set of squares $(r,s)$ that can be
added to the diagram of $\lambda$ so that the result is still a
diagram of a partition (in other words, $\cor'[\la]$ are the corners
of the complementary partition), and
$$\dor'_{rs}[\la] \, = \, \{(i,j)\in [\la], \
\text{such that} \ i \ne r, \, j \ne s\}\ts .
$$
In~\cite{kon}, the following complementary weighted branching rule is proved:
$$
\aligned
&  \prod_{(i,j) \in [\la]} \, \left(x_{i} + \ldots + x_{\la_j'}
+ \. y_{j}+ \ldots + y_{\la_i}\right) \\
&  \, = \, \sum_{(r,s) \in \cor'[\la]} \left[
\prod_{(i,j) \in \dor'_{rs}[\la]} \, \left(x_{i} + \ldots + x_{\la_j'}
+ \. y_{j}+ \ldots + y_{\la_i}\right)\right]\\
& \, \times \,  \left[ \prod_{i=1}^{r-1} \, \left(x_{i+1} + \ldots + x_{r}
+ \. y_{s}+ \ldots + y_{\la_i}\right)\right]  \cdot
\left[ \prod_{j=1}^{s-1} \, \left(x_r + \ldots + x_{\la_j'} + y_{j+1} + \ldots + y_{s}
 \right)\right]
\endaligned
$$

Let us note that although the $(q,t)$-hook walk defined in~\cite{GH} 
has several similarities, in full generality it is not a special case of 
the weighted hook walk.  While this might seem puzzling, let us emphasize
that the walks come from algebraic constructions of a completely different 
nature.  In many ways, it is much more puzzling that the
algebraic part of~\cite{CKP} is related to the branching rule at all.  

Finally, let us mention several new extensions of the hook
length formula recently introduced by Guo-Niu Han in~\cite{Han1,Han2}.  
There is also a hook walk style proof of the main identity in~\cite{CLPS}, 
which suggests a possibility of a ``weighted'' generalization.

\subsection{ } \, As we mentioned in the introduction, this paper
extends the results in our previous paper~\cite{CKP}, where we gave a
combinatorial proof of the following delicate result in the
enumerative algebraic geometry. \.
Denote by \. $w_\bs \ts = \ts i \ts \al + j \ts \beta$ \.
the weight of
a square \. $\bs=(i,j) \in \la$ \. in a Young diagram~$\la$.  Then:
$$
\sum_{\bs \in [\lambda]} \, w_\bs \. \cdot \.
\prod_{\bt \in [\lambda^\bs]}  \,
\frac{(w_\bt-w_\bs-\alpha)(w_\bt-w_\bs-\beta)}{(w_\bt-w_\bs-\alpha-\beta)(w_\bt-w_\bs)}
\, \. = \, n \. (\alpha + \beta)\.,
$$
where the product is over all squares in $[\la^\bs]$, defined as the Young diagram~$[\la]$
without squares \ts $\bs=(i,j)$  and~$(i+1,j+1)$, at which the denominator vanishes.
We refer to~\cite{CKP} for an explicit substitution which allows us to derive this
formula from~(WHL).

In a similar direction, we can obtain formulas corresponding to identities (b)--(d)
in Theorem~\ref{t:main}. We present them here without a proof.  Denote by $m = \la_1$ and
$\ell = \la_1'$ the lengths of the first row and the first column of~$[\la]$, respectively.
Then $w_{m\ts 0} = \la_1\ts\al$, $w_{0\ts \ell} = \la_1'\ts \be$, and
we have:
$$
\sum_{\bs \in [\lambda]} \, \frac{w_\bs}{w_\bs-w_{m\ts 0}}
\. \cdot \.
\prod_{t \in [\lambda^\bs]}  \,
\frac{(w_\bt-w_\bs-\alpha)(w_\bt-w_\bs-\beta)}{(w_\bt-w_\bs-\alpha-\beta)(w_\bt-w_\bs)}
\, \. = \,m \ts \left(1 + \frac{\alpha}\beta\right),
$$
$$
\sum_{\bs \in [\lambda]} \, \frac{w_\bs}{w_\bs-w_{0\ts \ell}}
\. \cdot \.
\prod_{\bt \in [\lambda^\bs]}  \,
\frac{(w_\bt-w_\bs-\alpha)(w_\bt-w_\bs-\beta)}{(w_\bt-w_\bs-\alpha-\beta)(w_\bt-w_\bs)} \, \.
= \, \ell \ts \left(1 + \frac{\be}\al\right),
$$
$$
\sum_{\bs \in \lambda} \frac{w_\bs}{(w_\bs-w_{m\ts 0})(w_\bs-w_{0 \ts \ell})}
\. \cdot \.
\prod_{\bt \in \lambda^\bs}  \,
\frac{(w_\bt-w_\bs-\alpha)(w_\bt-w_\bs-\beta)}{(w_\bt-w_\bs-\alpha-\beta)(w_\bt-w_\bs)} \, = \,
\frac{1}\alpha \. + \. \frac{1}\beta \,\..
$$
It would be interesting to understand the role of these formulas in the algebraic context.

\vskip.6cm

\noindent
{\bf Acknowledgements} \ The authors are grateful to Dennis Stanton for pointing
out~\cite{CLPS} to us and explaining its inner working. I.C-F. would like to 
thank the Korean Institute for Advanced Studies for support and
excellent working conditions. M.K. would like to thank Paul Edelman
for several helpful comments on an early draft of this paper. I.P. would like to
thank Persi Diaconis for teaching him Kerov's ``segment splitting'' algorithm.
Partial support for I.C.-F. from the NSF under the grant DMS-0702871 is gratefully
acknowledged. I.P. was partially supported by the NSF and the NSA.

\end{document}